\documentclass{ifacconf}
\newif\ifIsArXiv
\IsArXivtrue

\usepackage{graphicx}
\usepackage{natbib}

\usepackage{amsmath}
\usepackage{amssymb}
\usepackage{tikz}
\usetikzlibrary{patterns,arrows, math}
\usepackage{pgfplots}

\usepackage{textcomp}
\def\BibTeX{{\rm B\kern-.05em{\sc i\kern-.025em b}\kern-.08em
    T\kern-.1667em\lower.7ex\hbox{E}\kern-.125emX}}

\usepackage{subcaption}
\usepackage{comment}
\usepackage{enumerate}
\usepackage{placeins}
\graphicspath{{resources/}}

\usepackage{tikz}
\usetikzlibrary{shapes, arrows}
\tikzstyle{block} = [draw, fill=white, rectangle, minimum height=0em, minimum width=0em]
\tikzstyle{output} = [coordinate]
\tikzstyle{input} = [coordinate]

\newtheorem{algo}[thm]{Algorithm}


\newcommand{\setdef}[2]{\left\{\, #1\, \left|\, \vphantom{#1} #2 \right.\right\}}

\newcommand{\R}{\mathbb{R}}

\newcommand{\N}{\mathbb{N}}

\newcommand{\cU}{\mathcal{U}}

\newcommand{\cD}{\mathcal{D}}
\newcommand{\cF}{\mathcal{F}}
\newcommand{\cG}{\mathcal{G}}

\newcommand{\ve}{\varepsilon}
\newcommand{\eps}{\varepsilon}
\renewcommand{\phi}{\varphi}

\newcommand{\fa}{\ \forall \, }
\newcommand{\ex}{\ \exists \, }
\newcommand{\rbl}{\left (}
\newcommand{\rbr}{\right )}
\newcommand{\sbl}{\left [}
\newcommand{\sbr}{\right ]}

\newcommand{\bl}{\left |}
\newcommand{\br}{\right |}
\newcommand{\nl}{\left\|}
\newcommand{\nr}{\right\|}

\newcommand{\Abs}[1]{\bl #1 \br}
\newcommand{\Norm}[2][ ]{\nl #2 \nr_{#1}}
\newcommand{\SNorm}[1]{\Norm[\infty]{#1}}

\newcommand{\GL}{\text{GL}}

\newcommand{\con}{\mathcal{C}}

\newcommand{\Rp}{\R_{\geq0}}

\DeclareMathOperator*{\loc}{loc}
\DeclareMathOperator*{\rf}{ref}

\sloppy
\begin{document}
\begin{frontmatter}

\title{Towards Funnel MPC for nonlinear systems with relative degree two\thanksref{footnoteinfo}} 

\thanks[footnoteinfo]{D. Dennst\"adt gratefully thanks the Technische Universit\"at Ilmenau and the
    Free State of Thuringia for their financial support as part of the Th\"uringer
Graduiertenf\"orderung.
\ifIsArXiv\\
   This work has been submitted to IFAC for possible publication.
\fi
}

\author[First]{Dario Dennstädt}

\address[First]{Technische Universit\"at Ilmenau,
        Weimarer~Str.~25, 98693 Ilmenau, Germany (dario.dennstaedt@tu-ilmenau.de).}
\begin{abstract}
    Funnel MPC, a novel Model Predictive Control (MPC) scheme, allows guaranteed output tracking of
    smooth reference signals with prescribed error bounds for nonlinear multi-input multi-output
    systems. To this end, the stage cost resembles the high-gain idea of funnel control. However,
    rigorous proofs for initial and recursive feasibility without incorporating additional output
    constraints in the Funnel MPC scheme are only available for systems with relative degree one and
    stable internal dynamics. In this paper, we extend these results to systems with relative degree
    two by incorporating also a term based on the idea of a derivative funnel in the stage cost.
\end{abstract}

\begin{keyword}
model predictive control, funnel control, output tracking, nonlinear systems
\end{keyword}

\end{frontmatter}

\section{Introduction}
Model Predictive Control~(MPC) is a widely-used control technique for linear and nonlinear
systems and has seen various applications, see e.g. \cite{QinBadg03}. Key reasons for its success
are its applicability to multi-input multi-output nonlinear systems and its ability to directly take
control and state constraints into account. To this end, a finite-horizon Optimal Control Problem
(OCP) is solved before the prediction horizon is shifted forward in time and the procedure is
repeated ad infinitum, see e.g. \cite{CoroGrun20} or the textbook \cite{grune2017nonlinear,
rawlings2017model} nicely illustrating the basic concept for discrete-time systems.

\textit{Recursive feasibility} is essential for successfully applying MPC, see e.g.~\cite{EsteWort20}.
This means, solvability of the OCP at a particular time instant has to automatically imply
solvability of the OCP at the successor time instant. 
In order to achieve this, often, suitably designed terminal
conditions (cost and constraints) are incorporated in the OCP to be solved at each time instant, see~\cite{ChenAllg98} or the textbook
~\cite{rawlings2017model} and the references therein.
However, such (artificially introduced) terminal conditions increase the computational burden
of solving the OCP and complicate
the task of finding an initially-feasible solution. 
As a consequence, the domain of the MPC feedback controller might become significantly smaller,
see e.g. \cite{chen2003terminal,gonzalez2009enlarging}.
This technique becomes considerably more involved in the presence
of time-varying state constraints, see e.g. \cite{manrique2014mpc} and references therein.

To overcome these restrictions for a large system class, Funnel MPC (FMPC) was proposed
in~\cite{berger2019learningbased}. This allows output tracking such that the tracking error
evolves in a pre-specified, potentially time-varying performance funnel. A
``funnel-like'' stage cost, which penalizes the tracking error and becomes infinite when approaching
the funnel boundary, is used. By incorporating output constraints in the~OCP and using properties
of the system class in consideration, initial and recursive feasibility are shown
--~without imposing additional terminal conditions  and independent of the length of the prediction
horizon.

The novel stage cost used in FMPC is inspired by  funnel control, a model-free output-error
feedback controller first proposed in~\cite{IlchRyan02b}, see also the recent work by~\cite{BergIlch21}
for a comprehensive literature overview. The funnel controller is an adaptive controller which
allows output tracking within a prescribed performance funnel for a fairly large class of systems
solely invoking structural assumptions, i.e.\ stable internal dynamics, known relative
degree, and a sign-definite high-frequency gain matrix.

It is shown in~\cite{BergDenn21} that such funnel-inspired stage cost automatically ensure initial
and recursive feasibility for a class of nonlinear systems with relative degree one and, in a
certain sense, input-to-state stable internal dynamics.
Since the requirement of a sign-definite gain matrix is omitted, the system class is larger than
the one the original funnel controller is applicable to.
Moreover, adding (artificial) output constraints to the~OPC, as used in the prior work, is superfluous.
In numerical simulations, FMPC shows superior performance compared to both MPC with quadratic stage
cost and funnel control.

Based on these simulations, it was suspected that these results also hold true for systems with
higher relative degree. We show that this is in fact true and that for the scalar case the findings
in~\cite{BergDenn21} can be generalized to systems with relative degree two. However, while previous
results allow for an arbitrary short prediction horizon, for this system class a sufficiently long
horizon --~depending on the funnel~-- is necessary. A further generalization of these results to MIMO
systems with relative degree two can be found in~\cite{Denn22}.

\textbf{Notation:}
$\N$ and $\R$ denote natural and real numbers, resp. $\N_0:=\N\cup\{0\}$ and
$\Rp:=[0,\infty)$. $\Norm{\cdot}$~denotes a norm in $\R^n$. $\GL_n(\R)$ is the group of invertible
$\R^{n\times n}$ matrices. $\con^p(V,\R^n)$ is the linear space of $p$-times continuously
differentiable functions $f:V\to\R^n$, where $V\subset\R^m$ and $p\in\N_0\cup \{\infty\}$. We use the
notation $\con(V,\R^n):=\con^0(V,\R^n)$ to refer to the space of continuous functions.
On an interval $I\subset\R$,  $L^\infty(I,\R^n)$ denotes the space of measurable essentially bounded
functions $f: I\to\R^n$ and $L^\infty_{\text{loc}}(I,\R^n)$ the space of
locally bounded measurable functions. 
Further, $W^{k,\infty}(I,\R^n)$ is the Sobolev space of all $k$-times weakly differentiable functions
$f:I\to\R^n$ such that $f,\dots, f^{(k)}\in L^{\infty}(I,\R^n)$.\\

\section{System class and control objective}
In this section the problem statement is introduced.
We present the considered system class and the control objective and recall some necessary definitions.

\subsection{System class}
We consider control affine multi-input multi-ouptput systems
\begin{equation}\label{eq:Sys}
    \begin{aligned}
        \dot{x}(t)  & = f(x(t)) + g(x(t)) u(t),\quad x(t^0)=x^0,\\
        y(t)        & = h(x(t)),
    \end{aligned}
\end{equation}
with~$t^0\in\Rp$, $x^0\in\R^n$, functions~$f\in \con^2(\R^n,\R^n)$, $g\in \con^2(\R^n,\R^{n\times
m})$, $h \in \con^3(\R^n,\R^m)$ and a control function $u \in L^\infty_{\loc}(\R_{\geq 0}, \R^m)$.
The system~\eqref{eq:Sys}
has a \textit{solution} in the sense of \textit{Carath\'{e}odory}, that is a function $x:[t^0,\omega)\to\R^n$,
$\omega>t^0$, with $x(t^0)=x^0$ which is absolutely continuous and
satisfies the ODE in~\eqref{eq:Sys} for almost all $t\in [t^0,\omega)$.
The \textit{response} associated with $u$ is any maximal solution of~\eqref{eq:Sys} and is denoted
by~$x(\cdot;t^0,x^0,u)$. It is unique since the right-hand side of~\eqref{eq:Sys} is locally Lipschitz
in~$x$.

We recall the notion of relative degree for system~\eqref{eq:Sys}, see e.g.~\cite[Sec. 5.1]{Isid95}.
Assuming that $f,g,h$ are sufficiently smooth, the Lie derivative of~$h$ along~$f$ is defined by
$\rbl L_f h\rbr(x):= h'(x) f(x)$. Lie derivatives of higher order are recursively defined by~$L_f^k
h := L_f (L_{f}^{k-1} h)$, for $k \in \mathbb{N}$, with~$L_f^0 h = h$.
Furthermore, for the matrix-valued function~$g$ we have
\[
    (L_gh)(x) := \sbl (L_{g_1}h)(x), \ldots, (L_{g_m}h)(x) \sbr,
\]
where~$g_i$ denotes the~$i$-th column of~$g$ for $i=1,\ldots, m$.
Then system~\eqref{eq:Sys} is said to have \emph{(global and strict) relative degree}~$r \in \mathbb{N}$, if
\begin{align*}
    \fa k \in \{1,\ldots,r-1\}\fa x \in \R^n:\
        (L_g L_f^{k-1} h)(x)  = 0&  \\
      \text{and}\quad
        (L_g L_f^{r-1} h)(x)  \in \GL_m(\R).&
\end{align*}
If~\eqref{eq:Sys} has relative degree~$r$, then, under the additional assumptions provided
in~\cite[Cor.~5.6]{ByrnIsid91a}, there exists a diffeomorphic coordinate transformation
\begin{align}\label{eq:ExitencePhi}
    \Phi\!:\!\R^n\!\to\R^n,\Phi(x(t))=(y(t), \dot y(t),\ldots, y^{(r-1)}(t),\eta(t))
\end{align}
which puts the system into Byrnes-Isidori form
\begin{subequations}\label{eq:BIF}
    \begin{align}
        y^{(r-1)}(t) &= p\big( y(t), \dot y(t),\ldots, y^{(r-1)}(t),\eta(t)\big)\notag \\
        &\quad + \gamma\big( y(t), \dot y(t),\ldots, y^{(r-1)}(t),\eta(t)\big)\,u(t),
        \label{eq:output_dyn}\\
        \dot \eta(t) &= q\big( y(t), \dot y(t),\ldots, y^{(r-1)}(t),\eta(t)\big),\label{eq:zero_dyn}
    \end{align}
\end{subequations}
where $p\in\con(\R^n,\R^{m})$, $q\in\con(\R^n,\R^{n-rm})$, $\gamma = (L_g L_f^{r-1} h) \circ
\Phi^{-1}\in\con(\R^n,\R^{m\times m})$ and $(y(t^0),\dot y(t^0),\ldots, y^{(r-1)}(t^0),\eta(t^0)) =  \Phi(x^0)$.
Furthermore, we require the following \emph{bounded-input, bounded-state} (BIBS) condition on the
internal dynamics~\eqref{eq:zero_dyn}:
\begin{multline}\label{eq:BIBO-ID}
        \fa c_0 >0  \ex c_1 >0  \fa t^0\ge 0 \fa  \eta^0\in\R^{n-rm} \\
       \fa  \zeta\in L^\infty_{\loc}([t^0,\infty),\R^{rm}):\ \Norm{\eta^0}+
        \SNorm{\zeta}  \leq c_0\\ \implies\ \SNorm{\eta (\cdot;t^0,\eta^0,\zeta)} \leq c_1,\,  
\end{multline}
where $\eta (\cdot;t^0, \eta^0,\zeta):[t^0,\infty)\to\R^{n-rm}$ denotes the unique global solution
of~\eqref{eq:zero_dyn} when $(y,\ldots,y^{(r-1)})$ is substituted by~$\zeta$. The maximal solution
$\eta (\cdot;t^0, \eta^0,\zeta)$ can indeed be extended to a global solution due to the BIBS
condition~\eqref{eq:BIBO-ID}. 

Throughout this paper we will assume that the system~\eqref{eq:Sys} has relative degree~$r=2$ and
that there exists a diffeomorphism~$\Phi$ as in~\eqref{eq:ExitencePhi} which puts the system into
the Byrnes-Isidori form~\eqref{eq:BIF}.

\subsection{Control objective}
The objective is to design a control strategy which allows the output tracking of  given
reference trajectories~$y_{\rf}\in W^{2,\infty}(\Rp,\R^m)$ within pre-specified error bounds. To be
precise, the tracking error $t\mapsto e(t):=y(t)-y_{\rf}(t)$ and its derivative~$\dot{e}(t)$
shall evolve within the prescribed performance funnels
\begin{align*}
    \cF_{\psi_i}:= \setdef{(t,e)\in \Rp\times\R^{m}}{\Norm{e} < \psi_{i}(t)}, \quad i=0,1,
\end{align*}
see also Figure~\ref{Fig:funnel}.
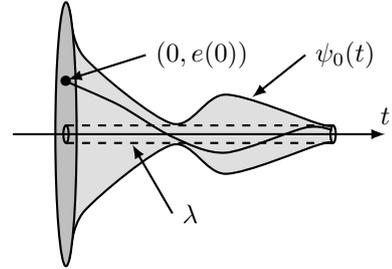
\begin{figure}[h]
\begin{center}
\begin{tikzpicture}[scale=0.35]
\tikzset{>=latex}
  \filldraw[color=gray!25] plot[smooth] coordinates {(0.15,4.7)(0.7,2.9)(4,0.4)(6,1.5)(9.5,0.4)(10,0.333)(10.01,0.331)(10.041,0.3) (10.041,-0.3)(10.01,-0.331)(10,-0.333)(9.5,-0.4)(6,-1.5)(4,-0.4)(0.7,-2.9)(0.15,-4.7)};
  \draw[thick] plot[smooth] coordinates {(0.15,4.7)(0.7,2.9)(4,0.4)(6,1.5)(9.5,0.4)(10,0.333)(10.01,0.331)(10.041,0.3)};
  \draw[thick] plot[smooth] coordinates {(10.041,-0.3)(10.01,-0.331)(10,-0.333)(9.5,-0.4)(6,-1.5)(4,-0.4)(0.7,-2.9)(0.15,-4.7)};
  \draw[thick,fill=lightgray] (0,0) ellipse (0.4 and 5);
  \draw[thick] (0,0) ellipse (0.1 and 0.333);
  \draw[thick,fill=gray!25] (10.041,0) ellipse (0.1 and 0.333);
  \draw[thick] plot[smooth] coordinates {(0,2)(2,1.1)(4,-0.1)(6,-0.7)(9,0.25)(10,0.15)};
  \draw[thick,->] (-2,0)--(12,0) node[right,above]{\normalsize$t$};
  \draw[thick,dashed](0,0.333)--(10,0.333);
  \draw[thick,dashed](0,-0.333)--(10,-0.333);
  \node [black] at (0,2) {\textbullet};
  \draw[->,thick](4,-3)node[right]{\normalsize$\lambda$}--(2.5,-0.4);
  \draw[->,thick](3,3)node[right]{\normalsize$(0,e(0))$}--(0.07,2.07);
  \draw[->,thick](9,3)node[right]{\normalsize$\psi_0(t)$}--(7,1.4);
\end{tikzpicture}
\end{center}
 \vspace*{-2mm}
 \begin{center}
     \caption{Error evolution in a funnel $\mathcal F_{\psi_0}$ with boundary $\psi_0(t)$.}
     \label{Fig:funnel}
 \end{center}
 \end{figure}

\vspace*{-6mm}
These funnels are determined by the choice of the functions $\psi_0$, $\psi_1$ belonging to
\begin{align*}
    \cG^0:=\setdef
        {\psi\in W^{2,\infty}(\Rp,\R)}
        {
            \inf_{t\geq 0}\psi(t) > 0
        }.
\end{align*}
Note that the funnel $\psi_i$ is uniformly bounded away from zero; i.e.\ there exists a boundary
$\lambda>0$ with $\psi_i(t)>\lambda$ for all $t\geq0$. Thus, perfect or asymptotic tracking is not our
control objective. However, $\lambda$ can be be arbitrarily small.
Furthermore, the funnel boundary is not necessarily monotonically decreasing.

If the error~$e$ evolves within the funnel~$\cF_{\psi_0}$ for some $\psi_0\in\cG^0$, then the
derivative~$\dot{e}$ has to satisfy at some point $t\geq0$
\[
    \dot{e}(t)<\dot{\psi}_0(t)\text{ or } \dot{e}(t)>-\dot{\psi}_0(t).
\]
Thus, the derivative funnel must be large enough for the error~$e$ to follow the funnel
boundary~$\psi_0$ and we therefore assume that $\psi=(\psi_0,\psi_1)$ is an element of
\[
    \cG^1\!\!:=\!\setdef
    {\!\!(\psi_0,\psi_1)\!\in\! \cG^0\!\!\times\!\cG^0\!\!}
    {\!\!\!
        \ex \ve>\!0\!\fa t\geq0: \psi_1(t)\!\geq\!\ve-\dot{\psi}_0(t)
    \!\!}.
\]
Typically, the specific application dictates constraints on the tracking error and thus
indicates suitable choices for~$\psi$. 

\section{Funnel MPC} 

In order to extend the results from~\cite{BergDenn21} to systems of the form~\eqref{eq:Sys} with
relative degree two, we define, for $y_{\rf}\in W^{2,\infty}$, $t\geq0$, and
$\zeta=(\zeta^0,\zeta^1)\in\R^{2m}$,
\[
    e_i(t,\zeta):=\zeta^i-y^{(i)}_{\rf} \quad \text{ for } i =0,1.
\]
We propose for $\psi=(\psi_0,\psi_1)\in\cG^1$ and the design parameter~$\lambda_u\geq0$ the
new~\textit{stage cost function}
\begin{equation}\label{eq:stageCostFunnelMPC}
    \begin{aligned}
        &\ell:\Rp\times\R^{2m}\times\R^{m}\to\R\cup\{\infty\}, \\
       &(t,\zeta,u)\!\! \mapsto\!\!
        \begin{cases}
           \!\sum\limits_{i=0}^1\! \tfrac{1}{1- \Norm{e_i(t,\zeta)}^2/\psi_i(t)^2}\!+\! \lambda_u\!
           \Norm{u}^2\!\!\!, &
           \!\!\!\!\begin{array}{l}
           \Norm{e_i(t,\zeta)} \!\!\neq\! \psi_i(t)\\
           \text{for } i=0,1\end{array}\\
            \!\infty,&
           \!\!\!\!\begin{array}{l}
            \text{else}.
            \end{array}
        \end{cases}
    \end{aligned}
\end{equation}
By setting $\zeta=(y(t),\dot{y}(t))$, the terms $\frac{1}{1- \Norm{e_i(t,\zeta)}^2/\psi_i(t)^2}$
penalize the distance of the tracking error~$e(t)=y(t)-y_{\rf}(t)$ and its derivative~$\dot{e}(t)$
to their respective funnel boundaries~$\psi_i(t)$. The parameter~$\lambda_u$ allows to adjust a
suitable trade off between tracking performance and required control effort. Note that we allow
$\lambda_u=0$. %
The stage cost~$\ell$ is motivated by the construction of the funnel
controller in~\cite{HackHopf13} which also introduces an additional funnel for the derivative in
order to generalize the results from~\cite{IlchRyan02b} to systems with relative degree two.

Based on the stage cost~\eqref{eq:stageCostFunnelMPC}, we may define the Funnel MPC
(FMPC) algorithm as follows.
\begin{algo}[FMPC]\label{Algo:MPCFunnelCost}\ \\
    \textbf{Given:}
    System~\eqref{eq:Sys},  reference signal $y_{\rf}\in
    W^{2,\infty}(\Rp,\R^{m})$, funnel function $\psi=(\psi_0,\psi_1)\in\cG^1$,
    stage cost function~$\ell$ as in~\eqref{eq:stageCostFunnelMPC},
    $M>0$, $t^0\in\Rp$, and $x^0\in\R^n$ with
    \begin{align}\label{eq:DefinitionDt}
        \Phi(x^0)\! \in\! \cD_{t^0}\!:=
        \!\setdef{\!\!(\zeta,\eta)\!\in\!\R^{2m}\!\!\times\!\R^{n-2m}\!\!}
        {\!\!\begin{array}{l}
                \|e_i(t_0,\zeta)\|\!<\psi_i(t_0)\\
        \text{for } i=0,1 \end{array}\!\!},
    \end{align}
    where~$\Phi$ is the diffeomorphism from~\eqref{eq:ExitencePhi}.\\
    \textbf{Set} the time shift $\delta >0$, the prediction horizon $T>\delta$ and initialize the current time
        $\hat t :=t^0$.\\
    \textbf{Steps:}
    \begin{enumerate}[(a)]
    \item\label{agostep:FunnelMPCFirst}
        Obtain a measurement of the state~$x = \Phi^{-1}(y,\dot{y},\eta)$ at time~$\hat t$
        and set $\hat x:=x(\hat t)$.
    \item
        Compute a solution $u^{\star}\in L^\infty([\hat t,\hat t +T],\R^{m})$ of the Optimal
        Control Problem (OCP)
    \begin{equation}\label{eq:OCP}
    \begin{alignedat}{2}
            &\!\mathop
            {\operatorname{minimize}}_{u\in L^{\infty}([\hat t,\hat t+T],\R^{m})}  && \
            \int_{\hat t}^{\hat t+T} \ell\big(t,(y(t), \dot{y}(t)),u(t)\big) {\rm d}t \\
            &\quad \text{subject to}       &\ \ & \eqref{eq:Sys},\ x(\hat t)  = \hat{x},         \\
            &                        && \Norm{u(t)}  \leq M\ \text{ for } t\in [\hat t,\hat t +T]\\
    \end{alignedat}
    \end{equation}
    \item Apply the feedback law
        \begin{equation}\label{eq:FMPC-fb}
            \mu:[\hat t,\hat t+\delta)\times\R^n\to\R^m, \quad \mu(t,\hat x) =u^{\star}(t)
        \end{equation}
        to system~\eqref{eq:Sys}.
        Increase $\hat t$ by $\delta$ and go to Step~\eqref{agostep:FunnelMPCFirst}.

    \end{enumerate}
\end{algo}

In practical application there usually is a limitation $M>0$ on the maximal control that can be applied
to the system~\ref{eq:Sys}. To ensure that the control signal meets this bound, the constraint
$\Norm{u(t)}\leq M$ is added to the OPC~\eqref{eq:OCP} of the FMPC Algorithm~\ref{Algo:MPCFunnelCost}.

\section{Main result}

 Our main results is to show that for scalar systems the Funnel MPC
 Algorithm~\ref{Algo:MPCFunnelCost} is, given a sufficiently long prediction horizon~$T>0$ and large
 enough control constraint~$M>0$, initially and recursively feasible and that it guarantees the
 evolution of the tracking error~$e$ and its derivative~$\dot{e}$ within their respective
 performance funnels $\cF_{\psi_i}$.
\begin{thm}\label{Thm:FunnnelMPC}
    Consider scalar system~\eqref{eq:BIF} with strict relative degree~$r=2$ and $m=1$.
    Assume that there exists a diffeomorphism $\Phi:\R^n\to\R^n$ such that the coordination
    transformation in~\eqref{eq:ExitencePhi} puts the system~\eqref{eq:Sys} in the
    Byrnes-Isidori form~\eqref{eq:BIF} satisfying~\eqref{eq:BIBO-ID}.
    Let $\psi=(\psi_0,\psi_1)\in\cG^1$, $y_{\rf}\in
    W^{2,\infty}(\Rp,\R)$, $t^0\in\Rp$, $\delta>0$, and $B\subset \cD_{t^0}$ a compact set.
    Then there exist $T>\delta$ and $M>0$ such that the  FMPC
    Algorithm~\ref{Algo:MPCFunnelCost} is initially and
    recursively feasible for every $x^0\in B$, i.e.\ at time $\hat t = t^0$ and at each
    successor time $\hat t\in t^0+\delta\N$ the OCP~\eqref{eq:OCP}
    has a solution.
    In particular, the closed-loop system consisting of~\eqref{eq:Sys} and the FMPC
    feedback~\eqref{eq:FMPC-fb} has a (not necessarily unique) global solution
    $x:[t^0,\infty)\to\R^n$ and the corresponding input is given by
    \[
        u_{\rm FMPC}(t) = \mu(t,x(\hat t)),\quad t\in [\hat t,\hat t+\delta),\ \hat t\in
        t^0+\delta\N_0.
    \]
    Furthermore, each global solution~$x$ with corresponding input $u_{\rm FMPC}$ satisfies:
    \begin{enumerate}[(i)]
        \item\label{th:item:BoundedInput}
$\fa t\ge t^0:\ \Abs{u_{\rm FMPC}(t)}\leq M$.
        \item\label{th:item:ErrorInFunnel} $\fa t\ge t^0:\ \Abs{e^{(i)}(t)}<\psi_i(t)$ for $i=0,1$; in particular
            the error $e=y-y_{\rf}$ evolves within the funnel $\cF_{\psi_0}$ and $\dot{e}$ within $\cF_{\psi_1}$.
    \end{enumerate}
\end{thm}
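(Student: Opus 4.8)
The plan is to transfer the feasibility argument of \cite{BergDenn21} to relative degree two: every instance of the OCP \eqref{eq:OCP} is shown to be solvable by exhibiting one admissible input that keeps $e$ and $\dot e$ strictly inside their funnels, so that the stage cost \eqref{eq:stageCostFunnelMPC} stays finite, and then invoking a direct-method argument to pass from ``feasible'' to ``optimal''. The feasibility-certifying input will now be supplied by a funnel controller for relative degree two in the spirit of \cite{HackHopf13}: a time-varying output-error feedback $u_{\rm ref}$ whose gain blows up as $\Abs{e}\to\psi_0$ or $\Abs{\dot e}\to\psi_1$, and which --~because $\psi=(\psi_0,\psi_1)\in\cG^1$~-- keeps $(e(t),\dot e(t))\in\cF_{\psi_0}\times\cF_{\psi_1}$ for all future times when started from any feasible configuration. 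First I would establish the uniform bounds that fix $M$. Since $B\subset\cD_{t^0}$ is compact there is $\rho_0>0$ such that every $x^0\in B$ has its output components at distance at least $\rho_0$ from both funnel boundaries at time $t^0$; combining this with the BIBS condition \eqref{eq:BIBO-ID} and the $W^{2,\infty}$-bounds on $\psi_0,\psi_1,y_{\rf}$, the closed loop under $u_{\rm ref}$ keeps the full state $(y,\dot y,\eta)$ in a fixed compact set $K$, on which $p,\gamma,\gamma^{-1}$ are bounded; hence $u_{\rm ref}$ is bounded by some $M_0$ and the stage cost along this trajectory is bounded, so its cost over any horizon of length $T$ is at most $cT$ for a constant $c$ independent of the point in $B$ and of $T$. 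Setting $M:=M_0$, each OCP started from a feasible state in the relevant reachable set admits an admissible input of finite cost; existence of a minimizer then follows by the standard reasoning for control-affine systems (the feasible set $\{\Abs{u}\le M\}$ is weak-$*$ precompact, the associated trajectories converge uniformly, the cost is lower semicontinuous, and a minimizing sequence cannot let the trajectory approach a funnel boundary without its cost blowing up). In particular the FMPC algorithm is initially feasible for every $x^0\in B$.

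For recursive feasibility, assume the OCP at a time $\hat t$ has an optimal solution $u^\star$ on $[\hat t,\hat t+T]$. Finiteness of its cost forces $(e(s),\dot e(s))\in\cF_{\psi_0}\times\cF_{\psi_1}$ for all $s\in[\hat t,\hat t+T]$, so in particular $\Phi(x(\hat t+\delta))\in\cD_{\hat t+\delta}$ and $\Phi(x(\hat t+T))\in\cD_{\hat t+T}$. A feasibility-certifying input for the next OCP, over $[\hat t+\delta,\hat t+\delta+T]$, is the concatenation of the tail of $u^\star$ on $[\hat t+\delta,\hat t+T]$ with $u_{\rm ref}$ started from $x(\hat t+T)$ on $[\hat t+T,\hat t+\delta+T]$; this keeps $e,\dot e$ in the funnels, and the only point to check is the bound $\Abs{u}\le M$. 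For the tail of $u^\star$ it is inherited; for the $u_{\rm ref}$-piece it holds provided $x(\hat t+T)$ has a safety margin bounded below by a \emph{fixed} $\rho>0$. This is exactly where a sufficiently long horizon is needed: from the cost bound $cT$, the lower bound $\ell(t,\zeta,u)\ge 1/(1-\Abs{e_1(t,\zeta)}^2/\psi_1(t)^2)$, the $\cG^1$-compatibility of $\psi_0$ and $\psi_1$, and a braking-distance estimate (how close $e$ can get to $\psi_0$ given that $\dot e$ is confined by $\psi_1$ and the second-order dynamics in \eqref{eq:BIF} are governed by bounded $p,\gamma$ on $K$), one shows that once $T$ exceeds a threshold determined by $\psi$, $y_{\rf}$ and $K$, the optimal trajectory cannot be driven arbitrarily close to the funnel boundaries near the end of its horizon, which yields the uniform margin $\rho$. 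Iterating, the closed loop stays at every sampling time inside a fixed compact subset of the feasible states, and the argument of the previous paragraph applies verbatim at each $\hat t\in t^0+\delta\N_0$.

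It remains to read off the closed-loop statements. On each interval $[\hat t,\hat t+\delta)$ the applied input $u_{\rm FMPC}$ is the restriction of an admissible $u^\star$, so $\Abs{u_{\rm FMPC}(t)}\le M$, which is \eqref{th:item:BoundedInput}; and each $u^\star$, having finite cost, keeps $(e,\dot e)$ strictly inside $\cF_{\psi_0}\times\cF_{\psi_1}$ on its horizon and in particular on $[\hat t,\hat t+\delta)$, so concatenating over all sampling intervals gives $\Abs{e^{(i)}(t)}<\psi_i(t)$ for all $t\ge t^0$ and $i=0,1$, which is \eqref{th:item:ErrorInFunnel}. Since $y,\dot y$ are then confined to a compact set and $\eta$ stays bounded by \eqref{eq:BIBO-ID}, the closed-loop solution has no finite escape time and extends to a global solution on $[t^0,\infty)$.

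The main obstacle is the horizon estimate in the second paragraph: making the braking-distance argument quantitative and extracting from the cost bound a \emph{uniform} lower bound on the optimal trajectory's distance to the funnel boundaries near the end of the prediction horizon, so that the relative-degree-two funnel controller can certify feasibility of the next OCP with the \emph{same} $M$. Constructing that funnel controller and proving the uniform input bounds on the compact set delivered by the BIBS condition is the other technically substantial ingredient; the direct-method and concatenation steps are, as in \cite{BergDenn21}, routine once these are in place.
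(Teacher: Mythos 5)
Your skeleton (reduce everything to non-emptiness of the admissible set $\cU^{M}_{T}(\hat t,\hat x)$ from~\eqref{eq:SetControls}, get existence of a minimizer by a direct-method argument adapted from~\cite{BergDenn21}, certify the next OCP by concatenating the tail of $u^\star$ with an auxiliary input) is exactly the paper's skeleton, and your initial-feasibility step is in the same spirit as the paper's. The gap is in the recursive-feasibility step, and it is the step that actually carries the relative-degree-two difficulty. Your certificate for the appended piece on $[\hat t+T,\hat t+\delta+T]$ is a funnel controller whose gain, and hence whose input bound, blows up as the margin to the funnel boundary shrinks; so you need the \emph{optimal} trajectory to arrive at $\hat t+T$ with a margin bounded below by a fixed $\rho>0$, uniformly over all MPC iterations. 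You propose to extract this from the cost bound $cT$ plus a braking-distance estimate, but that argument does not deliver a fixed $\rho$: since $\Abs{\dot e}<\psi_1$ bounds the approach speed, the funnel term grows only like the reciprocal of the margin along the approach, so driving the error from margin $\tilde\ve$ down to margin $\rho$ costs only $O(\log(\tilde\ve/\rho))$. A cost bound of size $cT$ therefore only excludes margins smaller than roughly $\tilde\ve e^{-cT}$ — a bound that \emph{degrades} as $T$ grows, so "choose $T$ large" works against you, not for you; moreover the available cost bound is the cost of your own concatenated certificate, which (tail cost plus funnel-piece cost) is not shown to stay uniformly bounded over the infinitely many iterations. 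Nothing in the absence of terminal conditions prevents the optimizer from ending its horizon arbitrarily close to $\psi_0$, and with $\lambda_u\ge 0$ there is no quantitative mechanism in your argument forcing it away. So the "uniform terminal margin for the optimal trajectory" is an unproven claim, and as sketched it is the wrong mechanism.

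The paper closes exactly this hole differently: it never asks the optimal trajectory to keep a margin. At a successor time where the margin $\tilde\ve$ is lost, it uses the tail of $u^\star$ (feasible on the shortened horizon $T-\delta$) and shows that, provided $T-\delta$ exceeds a threshold depending on $\psi$ and $M$, one can construct $\hat u\in\cU^{M}_{T-\delta}(\hat t,\hat x)$ reaching a time $\tilde t$ at which either the margin $\tilde\ve$ is regained or $\dot e(\tilde t)=\dot\psi_0(\tilde t)$; in the latter case the input is extended so that the error rides parallel to the funnel boundary at constant distance forever. The point of that construction is that the input needed to \emph{maintain} $\ddot e=\ddot\psi_0$ is bounded by quantities depending only on $p,\gamma,q$ on a compact set, $\ddot\psi_0$ and $\ddot y_{\rf}$ — in particular it is independent of how small the remaining margin is — and the $\cG^1$ condition $\psi_1\ge\ve-\dot\psi_0$ guarantees $\dot e$ stays in its own funnel while doing so. The long horizon is needed only to fit the transient that steers $\dot e$ onto $\dot\psi_0$ (or regains the margin), since with relative degree two $\dot e$ cannot be changed instantaneously under a bounded input. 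If you replace your uniform-margin/funnel-controller handover by such a boundary-following certificate with margin-independent input bound, your argument aligns with the paper's; as it stands, the feasibility of the next OCP with the \emph{same} $M$ is not established.
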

\begin{proof}
    We provide a sketch of the proof. For more details we refer to~\cite{Denn22}.
    For $T>0$, $M>0$, $\hat{t}\geq t^0$, $\hat{x}$ with
    $\Phi(\hat{x})\in\cD_{\hat{t}}$ as in~\eqref{eq:DefinitionDt},
    and the interval $I_{\hat{t}}^T:=[\hat{t},\hat{t}+T]$, we denote
    by $\cU^{M}_{T}(\hat{t},\hat{x})$ the set
    \begin{align}\label{eq:SetControls}
        \setdef
        {\!\!u\!\in\! L^\infty(I_{\hat{t}}^T,\R)\!\!}
        { \!\!
            \begin{array}{l}
                    \SNorm{u}\!\!\!<M,\ x(t;\hat{t},\hat{x},u)\text{ satisfies
                    \eqref{eq:Sys} and}\\
                    \Phi(x(t;\hat{t},\hat{x},u))\in\cD_t\text{ for all } t\in I_{\hat{t}}^T\
            \end{array}
        \!\!\!}.
    \end{align}
    This is the set of all $L^\infty$-controls $u$ bounded by $M>0$ which, if applied to
    system~\eqref{eq:Sys}, guarantee that the error~$e(t)=y(t)-y_{\rf}(t)$ and its derivative
    $\dot{e}(t)$ evolve within their respective funnels.
    A straightforward adaption of Theorem~4.3 and Theorem~4.5 from~\cite{BergDenn21} to the
    current setting yields: if the set~$\cU^{M}_{T}(\hat{t},\hat{x})$ is non-empty,
    then the OCP~\eqref{eq:OCP} has a solution $u^{\star}\in\cU^{M}_{T}(\hat{t},\hat{x})$.
    Therefore, if $u^{\star}$ is applied to system~\eqref{eq:Sys}, then $\|e^{(i)}(t)\|<\psi_i(t)$ for
    all $t\in I^T_{\hat{t}}$ and $i=0,1$. In particular
    $\Phi(x(\hat{t}+\delta;\hat{t},\hat{x},u^\star))\in\cD_{\hat{t}+\delta}$.
    Thus, it is sufficient to show that there exist~$T>0$ and $M>0$ such that
    $\Phi(\hat{x})\in\cD_{\hat{t}}$ implies the non-emptiness of set~$\cU^{M}_{T}(\hat{t},\hat{x})$
    at time $\hat t = t^0$ and at each successor time $\hat t\in t^0+\delta\N$ during the
    Funnel MPC Algorithm~\ref{Algo:MPCFunnelCost}.

    Since the set $B$ is compact, there exists $\tilde{\ve}>0$ such that for all initial values $x^0$
    of system~\eqref{eq:Sys} with $\Phi(x^0)\in B$ the initial tracking error $e^0=y(t^0)-y_{\rf}(t^0)$
    satisfies $\Abs{e^0}<\psi_0(t^0)-\tilde{\ve}$ and $\tilde{\ve}<\inf_{t\geq 0}\psi_0(t)$.
    Using the Byrnes-Isidori form~\eqref{eq:BIF} and (BIBS) condition~\eqref{eq:BIBO-ID},
    one can show that there exists $M>0$ such that for every initial value $x^0$ with $\Phi(x^0)\in
    B$ there exists a control $\bar{u}\in L^\infty([t^0,\infty),\R)$  bounded by $M$ for which the following holds.
    If $\bar{u}$ is applied to~\eqref{eq:Sys}, then there exists $\bar{t}>t^0$ with
    $\Abs{e(t)}<\psi_0(t)$ for all $t\in[t^0,\bar{t}]$,
    $\psi_0(\bar{t})-\tilde{\ve}<\Abs{e(\bar{t})}< \psi_0(\bar{t})$, and
    $\Abs{\dot{e}(t)-\dot{\psi}_0(t)}=0$ for all $t\geq\bar{t}$.
    Thus, the distance of tracking error~$e$ to the upper (lower) funnel boundary remains constant from
    time $\bar{t}$ onwards. 
    Hence, $\Abs{e(t)}<\psi_0(t)$ for all $t\geq t^0$.
    Since there exists~$\ve>0$ with $\psi_1(t)\geq \ve-\dot{\psi}_0(t)$ for all $t\geq0$, the derivative $\dot{e}$
    also stays within the funnel boundary~$\psi_1$ from time $\bar{t}$ onwards. Choosing $M>0$ large
    enough, this can also be achieved up to $\bar{t}$. Thus, $\bar{u}\in\cU^{M}_{T}(t^0,x^0)$ for
    all $T>0$. The bound $M$ depends on $\psi_0$, $\psi_1$, $\tilde{\ve}$, the set $B$,  and the functions~$p$,
    $\gamma$, $q$ from~\eqref{eq:BIF}. An explicit construction of $M>0$ and $\bar{u}$ can be found
    in~\cite{Denn22}.

    Similar, if the tracking error $e$ satisfies $\Abs{e(\hat{t})}<\psi_0(\hat{t})-\tilde{\ve}$ at
    a time $\hat t\in t^0+\delta\N$, then $\cU^{M}_{T}(\hat{t},\hat{x})\neq\emptyset$. Otherwise,
    $\cU^{M}_{T-\delta}(\hat{t},\hat{x})$ is non empty, since the solution~$u^\star$ of the
    OCP~\eqref{eq:OCP} from the previous time step is an element of $\cU^{M}_{T-\delta}(\hat{t},\hat{x})$.
    Choosing~$\tilde{T}:=T-\delta$ large enough, it is possible to proof  that there exists a control
    $\hat{u}\in\cU^{M}_{T-\delta}(\hat{t},\hat{x})$ for which the following holds. 
    If $\hat{u}$ is applied to the system~\eqref{eq:Sys}, there exists a $\tilde{t}>\hat{t}$ with either
    $\Abs{e(\tilde{t})}<\psi_0(\tilde{t})-\tilde{\ve}$ or $\Abs{\dot{e}(\tilde{t})-\dot{\psi}_0(t)}=0$.
    Details on the construction of~$\hat{u}$ depending on a large enough horizon $T-\delta$ can be
    found in~\cite{Denn22}.

    In the first case, it follows with the previous reasoning that
    $\cU^{M}_{T}(\tilde{t},x(\tilde{t};\hat{t},\hat{x},\hat{u}))$ is not empty. This implies
    $\cU^{M}_{T}(\hat{t},\hat{x})\neq\emptyset$ since for all
    $u\in\cU^{M}_{T}(\tilde{t},x(\tilde{t};\hat{t},\hat{x},\hat{u}))$, the control $\check{u}$
    defined by $\check{u}\vert_{[\hat{t},\tilde{t}]} =\hat{u}$ and
    $\check{u}\vert_{[\tilde{t},\hat{t}+T]} =u$ is an element of $\cU^{M}_{T}(\hat{t},\hat{x})$.

    In the latter case, one can construct $\tilde{u}$ bounded by $M$ for which the following holds.
    $\tilde{u}\vert_{[\hat{t},\tilde{t}]} = \tilde{u}\vert_{[\hat{t},\tilde{t}]}$ and
    if $\tilde{u}$ is applied to the system~\eqref{eq:Sys},
    $\Abs{\dot{e}(\tilde{t})-\dot{\psi}_0(t)}=0$ for all $t\geq \tilde{t}$. 
    The application of $\tilde{u}$ guarantees that tracking error~$e$ remain 
    remains with constant distance within an~$\tilde{\eps}$--margin to the upper (lower) funnel boundary
    from time $\tilde{t}$ onwards.
    Thus, $\Abs{e(t)}<\psi_0(t)$ for all $t\geq\hat{t}$. Since there exists~$\ve>0$ with
    $\psi_1(t)\geq \ve-\dot{\psi}_0(t)$ for all $t\geq0$, the derivative $\dot{e}$ also stays within
    the funnel boundary~$\psi_1$. Hence, $\Abs{\dot{e}(t)}<\psi_1(t)$ for all $t\geq \hat{t}$.
    Therefore, $\tilde{u}\in\cU^{M}_{T}(\hat{t},\hat{x})$. The horizon $T$ depends on the funnel
    boundaries $\psi_0$, $\psi_1$, and the bound $M>0$. This completes the proof.
\end{proof}

\subsubsection{Remark} 
Note that proving the recursive feasibility of the OCP is not trivial. Similar to the proof of the
recursive feasibility of the FMPC Algorithm in~\cite{BergDenn21}, the main challenge is to guarantee
that the set of controls $\cU^{M}_{T}(\hat{t},\hat{x})$ as in~\eqref{eq:SetControls} is non-empty at
each time step~$\hat{t}$ of the Funnel MPC Algorithm~\ref{Algo:MPCFunnelCost}. While for systems with
relative degree one it is possible to find $M>0$ such that $\cU^{M}_{T}(\hat{t},\hat{x})$ is
non-empty at time step~$\hat{t}$ independent of the horizon $T>0$, for systems with relative degree
two this seems not to be possible. 
An explicit construction of $M$ and $T$ such that $\cU^{M}_{T}(\hat{t},\hat{x})\neq\emptyset$ is
guaranteed can be found in~\cite{Denn22} together with a generalization of
Theorem~\ref{Thm:FunnnelMPC} to MIMO systems, i.e\ $m>1$.

Note further that while the proof of Theorem~\ref{Thm:FunnnelMPC} makes extensive use of the
diffeomorphism~$\Phi$ as in~\eqref{eq:ExitencePhi} and the Byrnes-Isidori form~\eqref{eq:BIF}, their
computation is not necessary for the application of the FMPC Algorithm~\ref{Algo:MPCFunnelCost}.
Condition~\eqref{eq:DefinitionDt} requires the initial tracking
error~$e(t^0)$ and its derivative~$\dot{e}(t^0)$ to be within their respective funnel boundaries.
This can easily be verified by different means.

\section{Simulation}
    To demonstrate the application of the FMPC Algorithm~\ref{Algo:MPCFunnelCost}, we consider the
    example of a mass-spring system mounted on a car from~\cite{SeifBlaj13}. The mass $m_2$
    moves on a ramp inclined by the angle $\vartheta\in[0,\frac{\pi}{2})$ and mounted on a car with mass
    $m_1$ by a spring-damper system, see Figure~\ref{Mass.on.car}.
    \begin{figure}[htp]
        \begin{center}
        \includegraphics[trim=2cm 4cm 5cm 15cm,clip=true,width=6.5cm]{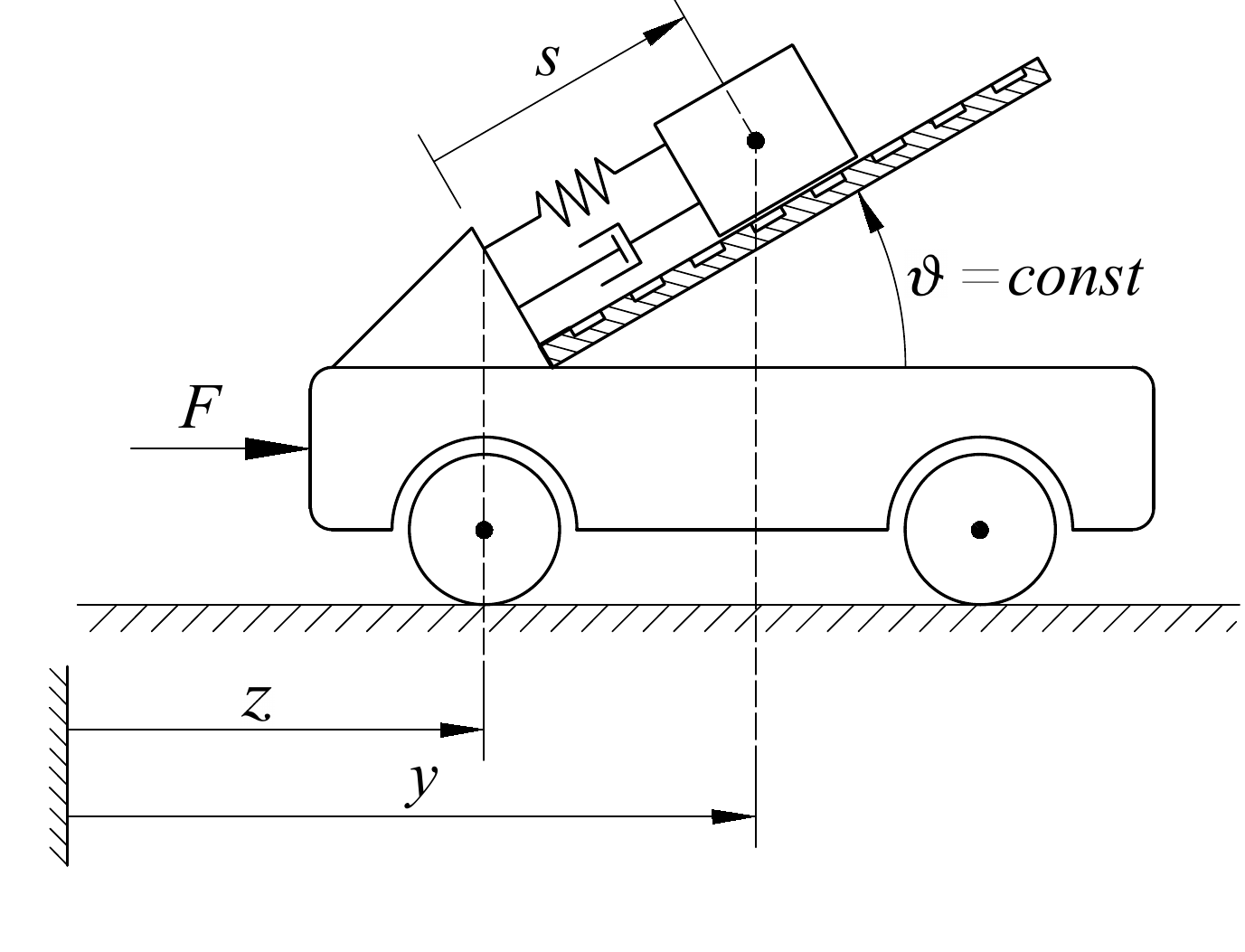}
        \end{center}
        \vspace*{-3mm}
        \caption{Mass-on-car system.}
        \label{Mass.on.car}
    \end{figure}
    It is possible to control the force~$F=u$ acting on the car.
     The motion of the system is described by the equations
    \begin{align}\label{eq:ExampleMassOnCarSystem}
    \begin{bmatrix}
        m_1 + m_2& m_2\cos(\vartheta)\\
        m_2 \cos(\vartheta) & m_2
    \end{bmatrix}\!\!
    \begin{pmatrix}
        \ddot{z}(t)\\
        \ddot{s}(t)
    \end{pmatrix}\!\!
    +\!\!
    \begin{pmatrix}
        0\\
        k s(t) +d\dot{s}(t)
    \end{pmatrix}
    \!\!=\!\!
    \begin{pmatrix}
        u(t)\\
        0
    \end{pmatrix}\!,
\end{align}
where $z(t)$ is the horizontal position of the car and $s(t)$ the relative position of the mass on
the ramp at time $t$.
The physical constants $k>0$ and $d>0$ are the coefficients of the spring and
damper, resp. The horizontal position of the mass on the ramp is the output $y$ of the system, i.e.
\begin{align}\label{eq:ExampleMassOnCarSystemOutput}
    y(t)=z(t)+s(t)\cos(\vartheta).
\end{align}
We choose the parameters $m_1=4$, $m_2=1$, $k=2$, $d=1$, $\vartheta=\tfrac{\pi}{4}$ and initial values
$z(0)=s(0)=\dot{z}(0)=\dot{s}(0) = 0$ for the simulation.
The objective is tracking of the reference signal $y_{\rf}:t\mapsto \cos(t)$ so that 
the error $t\mapsto e(t):=y(t)-y_{\rf}(t)$ satisfies $\Abs{e(t)}\leq\psi_0(t)$ and 
$\Abs{\dot{e}(t)}\leq\psi_1(t)$ for all $t\geq0$ with
 \[
     \psi_0(t)=3\e^{-2t}+0.1,\qquad \psi_1(t)=6e^{-t}+0.1.
 \]
One can easily verify that ~$\psi=(\psi_0,\psi_1)\in\cG^1$ and that the initial errors lie within
their respective funnel boundaries.
As shown in~\cite[Section 3.1]{BergIlch21}, the system~\eqref{eq:ExampleMassOnCarSystem} with
output~\eqref{eq:ExampleMassOnCarSystemOutput} has relative degree $r=2$ for the given parameters.
We compare the FMPC Algorithm~\ref{Algo:MPCFunnelCost} with stage cost~\eqref{eq:stageCostFunnelMPC}
to the FMPC scheme from~\cite{BergDenn21} which uses the stage cost function
\begin{equation}\label{eq:stageCostFunnelMPC1Funnel}
    \begin{aligned}
        &\tilde{\ell}:\Rp\times\R^{2m}\times\R^{m}\to\R\cup\{\infty\}, \\
       & (t,\zeta,u)\! \mapsto\!\!
        \begin{cases}
           \tfrac{1}{1- \Norm{e_0(t,\zeta)}^2/\psi_0(t)^2} + \lambda_u \Norm{u}^2\!\!,& \!\!\!\!\Norm{e_0(t,\zeta)} \!\neq\! \psi_0(t) \\
            \infty,&\!\!\text{else.}
        \end{cases}
    \end{aligned}
\end{equation}
The function~$\tilde{\ell}$ penalizes the distance of the tracking error~$e(t)=y(t)-y_{\rf}(t)$
to the funnel boundary~$\psi_0$ but, contrary to the stage cost
function~\eqref{eq:stageCostFunnelMPC}, not the distance of the
derivative~$\dot{e}(t)=\dot{y}(t)-\dot{y}_{\rf}(t)$ to the boundary~$\psi_1$.
In both cases we choose for the FMPC Algorithm~\ref{Algo:MPCFunnelCost} the prediction horizon
$T=0.6$ and time shift $\delta=0.04$.
Due to discretisation, only step functions with constant step length $0.04$ are considered for the
OCP~\eqref{eq:OCP}.
We further choose for both stage cost functions the parameter ${\lambda_u=5\cdot10^{-3}}$ and allow
a maximal control value of $M = 30$.
All simulations are performed on the time interval $[0,7]$ with the MATLAB routine \texttt{ode45}
and are depicted in Figure~\ref{Fig:SimulationFunnelMPC}.
Figure~\ref{Fig:SimulationOutputError} shows the tracking error of the two different FMPC schemes
evolving within the funnel boundaries given by~$\psi_0$, while
Figure~\ref{Fig:SimulationOutputErrorDot} displays the derivative of the error within the boundaries
given by~$\psi_1$ The respective control signals generated by the controllers is displayed in
Figure~\ref{Fig:SimulationControlInput}.

\begin{figure}[ht] \centering
    \begin{subfigure}[b]{0.45\textwidth}
     \centering
        \includegraphics[width=8.2cm]{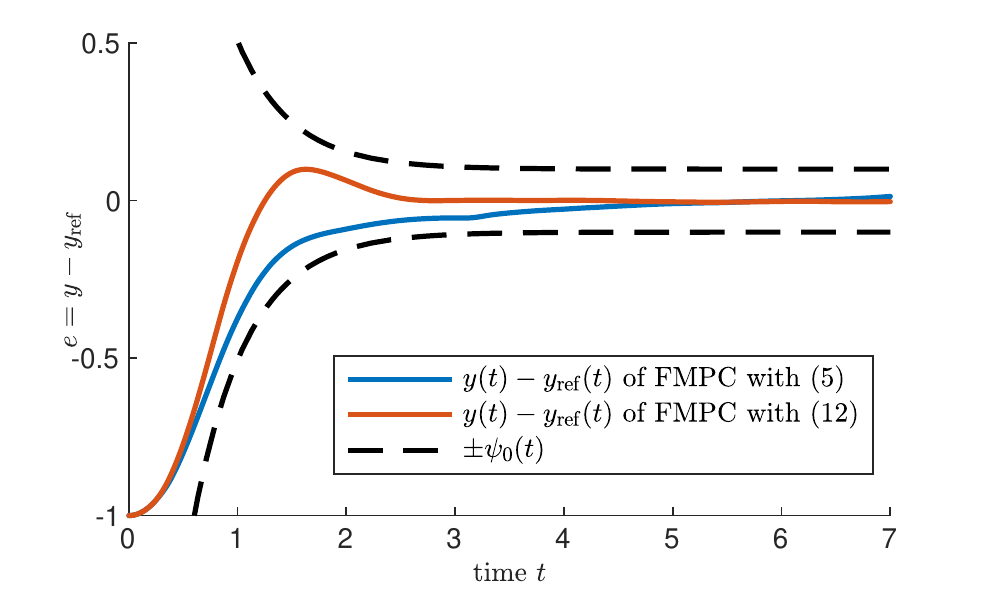}
        \caption{Funnel given by~$\psi_0$ and tracking error~$e$}
        \label{Fig:SimulationOutputError}
    \end{subfigure}
\end{figure}
\begin{figure}[ht]\ContinuedFloat
    \centering
    \begin{subfigure}[b]{0.45\textwidth}
        \centering
        \includegraphics[width=8.2cm]{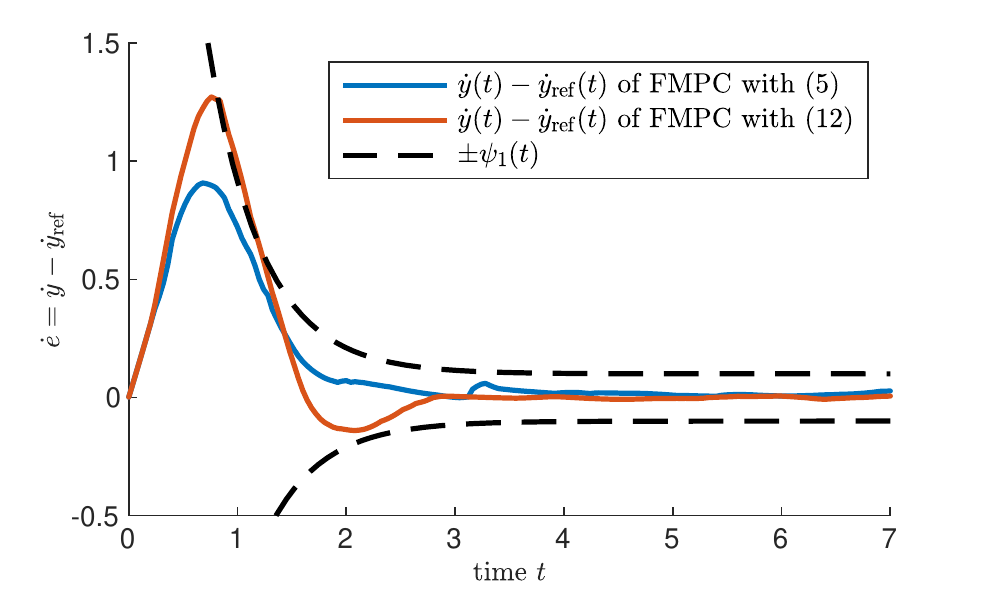}
        \caption{Funnel given by~$\psi_1$ and tracking error derivative~$\dot{e}$}
    \label{Fig:SimulationOutputErrorDot}
    \end{subfigure}
\end{figure}
\begin{figure}[ht]\ContinuedFloat 
    \centering
    \begin{subfigure}[b]{0.45\textwidth}
        \centering
        \includegraphics[width=8.2cm]{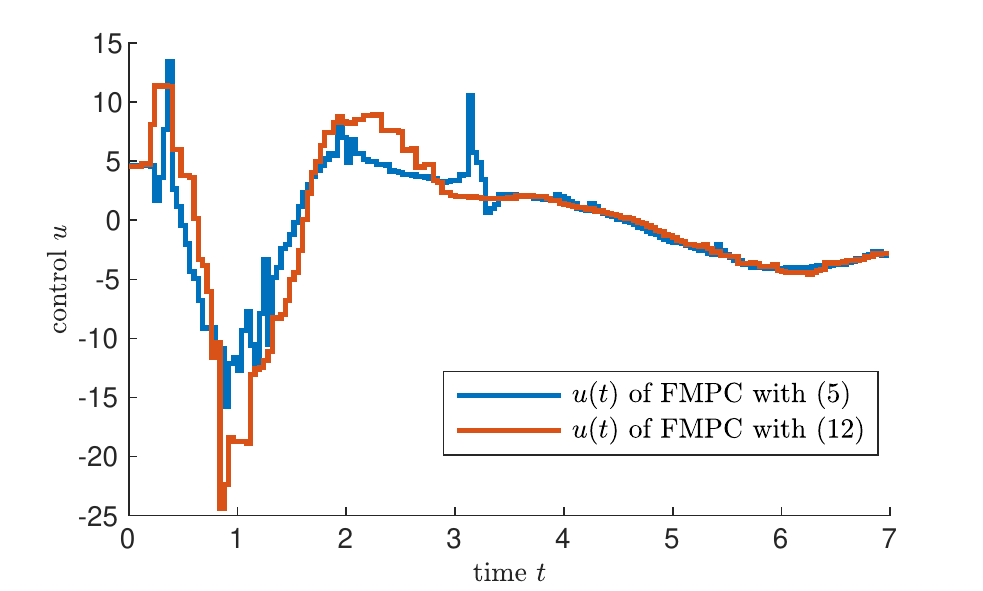}
        \caption{Control input}
        \label{Fig:SimulationControlInput}
    \end{subfigure}
    \caption{Simulation of system~\eqref{eq:ExampleMassOnCarSystem} with
    output~\eqref{eq:ExampleMassOnCarSystemOutput} under FMPC Algorithm~\ref{Algo:MPCFunnelCost} and FMPC from~\cite{BergDenn21}}
    \label{Fig:SimulationFunnelMPC}
\end{figure}
It is evident that both control schemes achieve the tracking of the reference signal within the
performance boundaries given by~$\psi_0$.
While the FMPC Algorithm~\ref{Algo:MPCFunnelCost} with stage cost function \eqref{eq:stageCostFunnelMPC} also
ensures that the derivative of the tracking error evolves within funnel given by~$\psi_1$,
FMPC scheme with stage cost function~$\tilde{\ell}$ as in~\eqref{eq:stageCostFunnelMPC1Funnel} fails
to do that and thus does not achieve the overall control objective.
This is not surprising since the function~$\tilde{\ell}$ does not penalize the distance of error's derivative
to the funnel boundary.
Moreover, the FMPC Algorithm~\ref{Algo:MPCFunnelCost} with stage cost function
\eqref{eq:stageCostFunnelMPC} exhibits a smaller range of employed control values as the FMPC scheme
from~\cite{BergDenn21}.

\section{Conclusion}

    In this note we outline a conceptual framework to extend the FMPC scheme proposed
    in~\cite{BergDenn21}, which solves the problem of tracking a reference signal within a
    prescribed performance funnel, to systems with relative degree two.
    By exploiting concepts from funnel control
    and using a ``funnel-like'' stage cost, feasibility is achieved without the need for
    additional terminal or explicit output constraints while also being restricted to (a priori) bounded
    control values. In particular, additional output constraints in the OCP of FMPC as considered
    in~\cite{berger2019learningbased} are not required to infer the feasibility results. However,
    contrary to previous results the prediction horizon has to be sufficiently long in order to
    guarantee recursive feasibility of the Funnel MPC algorithm.

    Extending these results to systems with arbitrary relative degree~$r>2$ is subject of future work.

\begin{ack}
    I thank Thomas Berger (Universität Paderborn), Achim Ilchmann (Tu Ilmenau), and Karl Worthmann
    (Tu Ilmenau) for many helpful discussions, suggestions, and corrections.
\end{ack}

\bibliography{references}
\appendix
\end{document}